\nonstopmode\numberwithin{equation}{section}
\newtheorem{definition}{Definition}[section]
\newtheorem{theorem}{Theorem}[section]
\newtheorem{lemma}{Lemma}[section]
\newtheorem{remark}{Remark}[section]
\newcommand{\be}{\begin{equation}}
\newcommand{\ee}{\end{equation}}
\begin{document}
\title{Velocity Syzygies and Bounding Syzygy Moments in the Planar Three-Body Problem }

\author{
Alexei Tsygvintsev \\
}
{
\noindent 
UMPA, Ecole Normale Sup\'{e}rieure de Lyon\\
Email: alexei.tsygvintsev@ens-lyon.fr
}

\bigskip
\begin{abstract}
We consider the Newtonian planar three-body problem, defining a syzygy (velocity syzygy) as a configuration where the positions (velocities) of the three bodies become collinear. We demonstrate that if the total energy is negative, every collision-free solution has an infinite number of velocity syzygies. Specifically, the velocities of the three bodies become parallel within every interval of time containing three consecutive syzygies. Using comparison theory for matrix Riccati equations, we derive new upper and lower bounds on the moments when syzygies occur.
 \end{abstract}

\keywords{   dynamical systems, celestial mechanics, three-body problem, syzygies}
\maketitle

\section{Introduction and Preliminary Results}

The Newtonian planar three-body problem has been a subject of extensive study in celestial mechanics. It involves predicting the motion of three bodies moving under their mutual gravitational attraction. Understanding syzygies, where the positions or velocities of the three bodies become collinear, provides significant insight into the dynamics of the system.

Let \( P_1 \), \( P_2 \), and \( P_3 \) be three points in the plane with strictly positive masses \( m_1, m_2, m_3 \) and Euclidean coordinates \( (x_i, y_i) \in \mathbb{R}^2 \), \( i = 1, 2, 3 \). The Newtonian three-body problem \cite{W} can be formulated as follows:

\begin{equation} \label{PLC}
\ddot{z}_1 = m_2 \frac{z_{21}}{|z_{21}|^3} - m_3 \frac{z_{13}}{|z_{13}|^3}, \quad \ddot{z}_2 = m_3 \frac{z_{32}}{|z_{32}|^3} - m_1 \frac{z_{21}}{|z_{21}|^3}, \quad \ddot{z}_3 = m_1 \frac{z_{13}}{|z_{13}|^3} - m_2 \frac{z_{32}}{|z_{32}|^3}\,,
\end{equation}
where \( z_k = x_k + i y_k \in \mathbb{C} \), \( k = 1, 2, 3 \) and \( z_{kl} = z_k - z_l \). 

Given that the total linear momentum (which is conserved) is zero, one can always set
\begin{equation} \label{sums}
\sum\limits_k m_k \dot{z}_k = 0, \quad \sum\limits_k m_k z_k = 0\,,
\end{equation}
by placing the center of mass of the bodies at the origin.

Let \( t \mapsto z_i(t) \), \( i = 1, 2, 3 \) be any collision-free solution of equations \eqref{PLC} defined for \( t \in I = [0, a) \), \( a > 0 \), and determined by a set of initial conditions \( (z_i(0), \dot{z}_i(0)) \), \( i = 1, 2, 3 \).

\begin{definition} \label{D}
We say that the three bodies \( P_1, P_2, P_3 \) form a syzygy (velocity syzygy) at the moment \( t_0 \in I \) if the complex triplet \( (z_1, z_2, z_3)(t_0) \) (\( (\dot{z}_1, \dot{z}_2, \dot{z}_3)(t_0) \)) is collinear.
\end{definition}

Our first result concerns the existence of velocity syzygies.

\begin{theorem} \label{T2}
Let \( t \mapsto (z_1(t), z_2(t), z_3(t)) \), \( t \in [0, +\infty) \), be a zero angular momentum, collision-free solution to the three-body problem \eqref{PLC} with negative energy. Then, it has an infinite number of velocity syzygies.
\end{theorem}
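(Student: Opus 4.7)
Pass to rescaled Jacobi coordinates
\[
\zeta_1=\sqrt{\mu_1}\,(z_2-z_1),\qquad
\zeta_2=\sqrt{\mu_2}\!\left(z_3-\frac{m_1z_1+m_2z_2}{m_1+m_2}\right),
\]
with the usual reduced masses $\mu_i$. Then $2T=|\dot\zeta_1|^2+|\dot\zeta_2|^2$ and the zero-angular-momentum hypothesis becomes $\mathrm{Im}(\bar\zeta_1\dot\zeta_1+\bar\zeta_2\dot\zeta_2)=0$. Since the three velocities $\dot z_k$ are collinear (and pass through the origin because $\sum m_k\dot z_k=0$) precisely when the pair $(\dot\zeta_1,\dot\zeta_2)$ is $\mathbb R$-linearly dependent in $\mathbb C$, the velocity-syzygy set is the zero set of the scalar
\[
\Delta(t):=\mathrm{Im}\!\bigl(\overline{\dot\zeta_1(t)}\,\dot\zeta_2(t)\bigr).
\]
The theorem is therefore equivalent to: $\Delta$ has infinitely many zeros on $[0,+\infty)$.

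The bridge between position and velocity syzygies is an elementary identity. Setting $A(t):=\mathrm{Im}(\overline{\zeta_1}\zeta_2)$, two differentiations together with Newton's equations $\ddot\zeta_i=-\partial V/\partial\bar\zeta_i$ give
\[
\ddot A(t)=2\Delta(t)+R(t),\qquad R(t):=\mathrm{Im}\!\bigl(\overline{\ddot\zeta_1}\,\zeta_2+\overline{\zeta_1}\,\ddot\zeta_2\bigr).
\]
I would then show that $R$ vanishes at every position syzygy. Indeed, at a collinear configuration one has $\zeta_1=\lambda\zeta_2$ with $\lambda\in\mathbb R$, and the Newtonian force on any body in a collinear three-body arrangement points along the common line; consequently $\ddot\zeta_1$ and $\ddot\zeta_2$ are themselves real multiples of $\zeta_2$, every complex product in the definition of $R$ is real, so $R(t_k)=0$ and therefore $\ddot A(t_k)=2\Delta(t_k)$ at each zero $t_k$ of $A$.

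The proof is completed by invoking the infinite-position-syzygy theorem under the present hypotheses (negative energy, zero angular momentum, no collisions), which produces an unbounded increasing sequence $t_1<t_2<\cdots$ of transverse zeros of $A$ with $\dot A(t_k)$ alternating in sign. Suppose, toward a contradiction, that $\Delta$ has constant sign on some $[t_k,t_{k+2}]$; then the three boundary values $\ddot A(t_j)=2\Delta(t_j)$, $j=k,k+1,k+2$, share this sign, while an interior extremum of $A$ inside each of $(t_k,t_{k+1})$ and $(t_{k+1},t_{k+2})$ forces $\ddot A$ to take the opposite sign at some interior point. Combining $\ddot A=2\Delta+R$, the boundary relations $R(t_j)=0$, and a comparison of the forced $2\times 2$ linear system satisfied by $(A,\dot A)$ with an autonomous scalar Riccati equation yields the desired contradiction. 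Hence $\Delta$ changes sign on every triple of consecutive position syzygies, producing infinitely many velocity syzygies.

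\textbf{Main obstacle.} The delicate point is precisely this last comparison step: the residual $R$ is known to vanish only at the isolated syzygy times $t_k$, while its interior behaviour is dictated by the full Newtonian dynamics and is not sign-controlled a priori. This is where the matrix Riccati comparison announced in the abstract is needed: projectivizing the linear system carrying $(A,\dot A)$ to a Riccati equation and comparing it with an explicit scalar oscillator converts the pointwise boundary vanishing of $R$ into the interior oscillation estimate required to conclude the sign change of $\Delta$. Both hypotheses of the theorem enter essentially--$L=0$ underlies the two-complex-variable reduction and forces $R(t_k)=0$, while $H<0$ is the negative-energy ingredient that provides the infinite sequence of position syzygies in the first place.
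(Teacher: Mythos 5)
Your reduction is sound up to the final step: in Jacobi coordinates the quantities $A=\mathrm{Im}(\overline{\zeta_1}\zeta_2)$ and $\Delta=\mathrm{Im}(\overline{\dot\zeta_1}\dot\zeta_2)$ are, up to a nonzero constant factor, the paper's determinants $\Delta_1=\det X$ and $\Delta_2=\det\dot X$; the identity $\ddot A=2\Delta+R$ is correct; and $R$ does vanish at collinear configurations. The genuine gap is the concluding comparison step, which you yourself flag as the obstacle: the sign of $R$ in the interior of a syzygy interval is not controlled, and the proposed repair does not exist as stated, because $(A,\dot A)$ does not satisfy a closed linear (or Riccati-projectivizable) system --- $\ddot A$ involves $\Delta$ and $R$, which are independent dynamical quantities, so there is no scalar oscillator to compare with. (The matrix Riccati comparison announced in the abstract is used in the paper only for Theorem \ref{TH}, the bounds on the first syzygy moment, not for this theorem.)

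The missing ingredient is the pointwise inequality \eqref{deltas} from the zero angular momentum hypothesis, which in your notation reads $\dot A(t)^2-4A(t)\Delta(t)\ge 0$ for all $t$; it says that the real quadratic $s\mapsto \mathrm{Im}\bigl(\overline{(\zeta_1+s\dot\zeta_1)}\,(\zeta_2+s\dot\zeta_2)\bigr)=A+s\dot A+s^2\Delta$ has a real root, with equality only when positions and velocities are simultaneously collinear --- a case excluded because, with $H<0$, it forces a straight-line solution ending in triple collision (this is also what justifies the transversality of the zeros of $A$, which you asserted without proof). Once you have this inequality the argument closes instantly and needs neither $R$ nor the identity $\ddot A=2\Delta+R$: by Rolle's theorem there are critical points $\tau\in(t_k,t_{k+1})$ and $\eta\in(t_{k+1},t_{k+2})$ of $A$; at such a point $\dot A=0$, so $-4A\Delta\ge0$ and $\Delta$ has (weak) sign opposite to that of $A$; since $A$ has opposite signs on the two subintervals, the Intermediate Value Theorem yields a zero of $\Delta$ in $[\tau,\eta]$. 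Replace your comparison step by this discriminant inequality (proved in the cited reference for the zero angular momentum case) and the proof goes through.
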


\begin{proof}
After introducing the new variables \( w_i = m_i z_i \), \( i = 1, 2, 3 \), the relations \eqref{sums} yield
\begin{equation} \label{bnb}
\sum\limits_i w_i = \sum\limits_i \dot{w}_i = 0.
\end{equation}

Writing
\begin{equation} \label{1234}
w_k = X_k + i Y_k, \quad X_k = m_k x_k, \quad Y_k = m_k y_k, \quad k = 1, 2, 3\,,
\end{equation}
and using \eqref{bnb}, one derives from the equations \eqref{PLC} the following \( 2 \times 2 \) matrix equation:
\begin{equation} \label{EQ}
\ddot{X} = AX, \quad X =
\begin{pmatrix}
X_1 & Y_1 \\
X_2 & Y_2
\end{pmatrix}, \quad
A = \begin{pmatrix}
-m_2 \rho_3 - m_{13} \rho_2 & m_1 \rho_{32} \\
m_2 \rho_{31} & -m_1 \rho_3 - m_{32} \rho_1
\end{pmatrix}\,,
\end{equation}
where
\begin{equation}
\rho_1 = \frac{1}{|z_{32}|^3}, \quad \rho_2 = \frac{1}{|z_{13}|^3}, \quad \rho_3 = \frac{1}{|z_{21}|^3}, \quad m_{ij} = m_i + m_j, \quad \rho_{ij} = \rho_i - \rho_j\,.
\end{equation}

We define the determinants
\begin{equation}
\Delta_1(t) =
\left|
\begin{array}{ll}
X_1 & Y_1 \\
X_2 & Y_2
\end{array}
\right|(t), \quad \Delta_2(t) =
\left|
\begin{array}{ll}
\dot{X}_1 & \dot{Y}_1 \\
\dot{X}_2 & \dot{Y}_2
\end{array}
\right|(t), \quad t \in [0, +\infty)\,.
\end{equation}

It is sufficient to show that \( \Delta_2 \) has an infinite number of zeros for \( t \geq 0 \). As demonstrated by Montgomery \cite{M1}, and later independently by the author in \cite{TS1, TS2}, the solution \( t \mapsto (z_1(t), z_2(t), z_3(t)) \), for \( t \in [0, +\infty) \), has an infinite number of consecutive syzygies. That is, there exist \( t_i \geq 0 \), \( i = 1, 2, \dots \), with \( t_{i+1} > t_i \) such that \( \Delta_1(t_i) = 0 \). As shown in \cite[p. 6833]{TS2}, in the zero angular momentum case:
\begin{equation} \label{deltas}
\dot{\Delta}_1(t)^2 - 4 \Delta_1(t) \Delta_2(t) \geq 0, \quad \forall t \geq 0\,,
\end{equation}
where equality occurs only if the positions \( z_i(t) \) and velocities \( \dot{z}_i(t) \) for \( i = 1, 2, 3 \) are parallel.  

Let \( t_i < t_{i+1} < t_{i+2} \) be three consecutive syzygy moments. By Rolle's theorem, there exist \( \tau \in (t_i, t_{i+1}) \) and \( \eta \in (t_{i+1}, t_{i+2}) \) such that \( \tau < \eta \) and \( \dot{\Delta}_1(\tau) = \dot{\Delta}_1(\eta) = 0 \). Without loss of generality, we can assume that \( \Delta_1(\tau) > 0 \) and \( \Delta_1(\eta) < 0 \). Indeed, if for some \( i \geq 0 \), \(\Delta_1(t_i) = \dot{\Delta}_1(t_i) = 0\), then according to \eqref{deltas}, the corresponding solution is a straight-line one (with all positions and velocities lying on the same line) and will result in a triple collision since the energy is negative.

 Thus, by the Intermediate Value Theorem and \eqref{deltas}, \( \Delta_2 \) has a zero in the interval \( [\tau, \eta] \). The proof is complete.
\end{proof}

\section{Bounds on Syzygy Moments in the Zero Angular Momentum Case}

It is well-established that in the case of negative energy and zero angular momentum, every collision-free solution in the interval \([0, +\infty)\) has an infinite number of syzygies \cite{M1,TS1,TS2}. In the author's previous works \cite{TS1, TS2}, an upper bound was determined for the moment when the very first syzygy occurs, expressed as a function of the energy value and the masses alone. In this section, we refine our results by providing both upper and lower bounds that depend on the initial positions and velocities of the bodies, resulting in significantly more precise estimates.

First, we present a preliminary result from the comparison theory for matrix Riccati equations, developed by Eschenburg and Heintze in 1990 \cite{EH}, which will be utilised later in the proof of our main result.

Let \( E \) be a finite-dimensional real vector space equipped with an inner product \((\cdot, \cdot)\), and let \( S(E) \) denote the space of self-adjoint linear endomorphisms of \( E \). Consider the Riccati differential equation with a smooth coefficient curve \( R : (0, t_0) \to S(E) \):
\begin{equation} \label{riccati}
\dot{B} + B^2 + R = 0,
\end{equation}
where \( B : (0, t_0) \to S(E)\) is the solution.

Given two smooth coefficient curves \( R_1, R_2 : (0, t_0) \to S(E) \) with \( R_1 \geq R_2 \) (i.e., \( R_1 - R_2 \) is positive semidefinite), we can compare the solutions \( B_1 \) and \( B_2 \) of the Riccati equations:
\begin{equation}
\dot{B}_1 + B_1^2 + R_1 = 0,
\end{equation}
and
\begin{equation}
\dot{B}_2 + B_2^2 + R_2 = 0,
\end{equation}
subject to appropriate initial conditions.

\begin{theorem}[\cite{EH}] \label{EH1}
Let \( R_1, R_2 : (0, t_0) \to S(E) \) be smooth coefficient curves with \( R_1 \geq R_2 \). For \( j = 1, 2 \), let \( B_j : (0, t_j) \to S(E) \) be the solution to the Riccati equation corresponding to \( R_j \), with maximal \( t_j \in (0, +\infty] \). If the difference \( U := B_2 - B_1 \) has a continuous extension to \( 0 \) with \( U(0) \geq 0 \), then \( t_1 \leq t_2 \) and \( B_1 \leq B_2 \) on \( (0, t_1) \).
\end{theorem}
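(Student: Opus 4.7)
The plan is to recast the difference $U:=B_2-B_1$ as the solution of a linear Lyapunov-type equation and conjugate it into monotone form. Subtracting the two Riccati equations and using the symmetric identity $B_1^2-B_2^2=-\tfrac12\bigl((B_1+B_2)U+U(B_1+B_2)\bigr)$, I obtain
\[
\dot U+SU+US=R_1-R_2\ge 0,\qquad S:=\tfrac12(B_1+B_2),
\]
valid on the common interval $(0,\min(t_1,t_2))$, where $S$ is smooth.

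I would then introduce the fundamental solution $\Phi:(0,\min(t_1,t_2))\to GL(E)$ of the linear ODE $\dot\Phi=S\Phi$, normalised at a small time $t_\ast>0$. Using $\dot\Phi^T=\Phi^T S$ (since $S$ is self-adjoint), a direct calculation gives
\[
\frac{d}{dt}\bigl(\Phi^T U\Phi\bigr)=\Phi^T(R_1-R_2)\Phi\ge 0,
\]
so $V:=\Phi^T U\Phi$ is non-decreasing. To deduce $U\ge 0$ on the common domain, I would let $t_\ast\to 0^+$: since $U$ extends continuously with $U(0)\ge 0$, the values $V(t_\ast)=U(t_\ast)$ are eventually bounded below by $-\epsilon I$ for any prescribed $\epsilon>0$, and monotonicity propagates the bound forward; conjugating back by $\Phi(t)^{-1}$ and then applying the standard $R_1\mapsto R_1+\epsilon I$ perturbation (strict comparison on a compact subinterval, then $\epsilon\to 0$) upgrades this to $U(t)\ge 0$, i.e.\ $B_1\le B_2$.

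For the time comparison $t_1\le t_2$, I would argue by contradiction: if $t_2<t_1$ then $B_1$ is continuous, and hence bounded, on $[t_2/2,t_2]$, whereas $B_2$ must blow up at $t_2$. Writing $B=\dot\Phi\Phi^{-1}$ linearises the Riccati equation into $\ddot\Phi+R\Phi=0$, so blow-up of $B_2$ at $t_2$ corresponds to $\Phi(t_2)$ becoming singular; for any nonzero $w\in\ker\Phi(t_2)$ the curve $u(t):=\Phi(t)w$ satisfies $u(t_2)=0$ and the identity $\langle B_2 u,u\rangle/\|u\|^2=\frac{d}{dt}\log\|u\|$, so some Rayleigh quotient of $B_2$ must tend to $-\infty$. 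Combined with $B_2\ge B_1$ and the boundedness of $B_1$, this is the required contradiction.

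I expect the main obstacle to be the limit $t_\ast\to 0^+$ in the monotonicity step, because $B_1$ and $B_2$ individually need not extend continuously to $0$ even though $U$ does; the $\epsilon$-perturbation device described above is the standard way to circumvent this, and a secondary technical point is the verification that Riccati blow-up is always ``downward'', which is handled cleanly via the linearisation $B=\dot\Phi\Phi^{-1}$.
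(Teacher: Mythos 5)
The paper does not prove Theorem \ref{EH1} at all: it is imported verbatim from Eschenburg--Heintze \cite{EH} and used as a black box, so your reconstruction can only be measured against the original argument. Much of what you write is sound. The identity $\dot U+SU+US=R_1-R_2$ with $S=\tfrac12(B_1+B_2)$, the monotonicity of $V=\Phi^{T}U\Phi$ for $\dot\Phi=S\Phi$, and the propagation of positivity from any interior time $t_\ast>0$ forward are all correct. The escape-time comparison is also essentially complete: if $t_2<t_1$, then $\Phi(t_2)$ is singular for the linearisation $\ddot\Phi+R_2\Phi=0$, and for $w\in\ker\Phi(t_2)$ the quantity $\frac{d}{dt}\log\|\Phi(t)w\|$, which is a Rayleigh quotient of $B_2$, must be unbounded below near $t_2$ because $\log\|\Phi(t)w\|\to-\infty$; this contradicts $B_2\ge B_1$ with $B_1$ bounded on $[t_2/2,t_2]$.

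The genuine gap is exactly at the point you flag as the main obstacle, and the device you propose does not close it. Conjugating back from $V$ to $U$ gives $U(t)\ge-\epsilon\,(\Phi(t)\Phi(t)^{T})^{-1}$ where $\Phi$ is normalised at $t_\ast$; writing $\Phi(t)^{-1}=\Psi(t_\ast)\Psi(t)^{-1}$ for a fixed fundamental solution $\Psi$, the error term is of size $\epsilon\,\|\Psi(t_\ast)\|^{2}$, and $\|\Psi(t_\ast)\|$ can blow up as $t_\ast\to0^{+}$ precisely in the singular situation the theorem exists to handle, namely when $B_1+B_2$ is non-integrable at $0$ (e.g.\ $B_j\sim t^{-1}I$, the Jacobi-field case). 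Replacing $R_1$ by $R_1+\epsilon I$ only strengthens the already favourable right-hand side $R_1-R_2\ge0$ and does nothing to control $\|\Psi(t_\ast)\|$, nor does it restore continuous dependence on data posed at a point sliding to the singularity. What actually closes the argument is a one-sided bound at the \emph{left} endpoint: for a self-adjoint Riccati solution and unit $v$, $\frac{d}{dt}\langle Bv,v\rangle=-\|Bv\|^{2}-\langle Rv,v\rangle\le-\langle Bv,v\rangle^{2}+\|R\|$, which forces $\lambda_{\min}(B_j(t))$ to stay bounded below as $t\to0^{+}$ (backward blow-up can only be towards $+\infty$); hence $S\ge-CI$ near $0$, which yields $\frac{d}{ds}\log\|\Psi(s)w\|^{2}\ge-2C$ and therefore $\sup_{t_\ast}\|\Psi(t_\ast)\|<\infty$, after which your limit goes through. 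You invoke the ``sign of blow-up'' fact at the right endpoint, where your argument already suffices, but not at $t=0$, where it is the crux. (For the application made in this paper the data are regular, $\tilde B_1(0)=\tilde B_2(0)$ finite, so your argument as written would actually cover the case the author needs.)
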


We now assume that at the instant \( t = 0 \), the configuration of the three-body problem is different from a syzygy, i.e., the positions of the three bodies are not parallel. In this case, the matrix \( C_0 = \dot{X}_0 X_0^{-1} \), where \( X_0 = X(0) \) and \( \dot{X}_0 = \dot{X}(0) \), is well-defined since \( \det(X_0) \neq 0 \). In the zero angular momentum case, as shown in \cite{TS2}, both eigenvalues of \( C_0 \) are real. We denote the spectrum of \( C_0 \) as:
\begin{equation}
\text{Spec}(C_0) = \{\lambda_1, \lambda_2\} \subset \mathbb{R} \,,
\end{equation}
and define
\begin{equation}
\pi_s = \min\{\lambda_1, \lambda_2\}\,.
\end{equation}

Finally, we suppose that during the motion, all mutual distances of the bodies are bounded above and below by the constants \(\alpha, \beta > 0\), with \(\alpha < \beta\), i.e.,
\begin{equation} \label{bounds}
\alpha \leq |z_{ij}(t)| \leq \beta, \quad  \forall \, i < j \,,
\end{equation}
and we define
\begin{equation} \label{ab}
\theta_{\alpha} = \frac{\sqrt{M}}{\alpha^{3/2}}, \quad \theta_{\beta} = \frac{\sqrt{M}}{\beta^{3/2}}, \quad M = m_1 + m_2 + m_3\,.
\end{equation}

\begin{theorem} \label{TH}
Let \( T_s > 0 \) be the moment in time when the first syzygy occurs. Then the following bounds hold:
\begin{equation} \label{main}
\frac{1}{\theta_{\alpha}} \mathrm{arccot}\left(- \frac{\pi_s}{\theta_{\alpha}}\right) \leq T_s \leq \frac{1}{\theta_{\beta}} \mathrm{arccot}\left(- \frac{\pi_s}{\theta_{\beta}}\right)\,.
\end{equation}
\end{theorem}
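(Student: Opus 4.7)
The plan is to apply the matrix Riccati comparison Theorem~\ref{EH1} to the matrix $C(t):=\dot X(t)X(t)^{-1}$. Since by hypothesis the initial configuration is not a syzygy, $X_0$ is invertible and $C(t)$ is well-defined on the maximal interval $[0,T_s)$ preceding the first syzygy. Differentiating $\ddot X = AX$ yields the matrix Riccati equation
\[
\dot C + C^2 + R = 0, \qquad R := -A,
\]
and $T_s$ coincides with the blow-up time of $C$: indeed $(\log|\det X|)' = \mathrm{tr}(C)$, so $\det X(t)\to 0$ forces $\int_0^t\mathrm{tr}(C)\,ds\to -\infty$, which in turn forces the smallest eigenvalue of $C$ to diverge to $-\infty$. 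Zero angular momentum makes $C(t)$ self-adjoint with respect to the appropriate mass-weighted inner product on $\mathbb{R}^2$ (the structural fact that underlies the reality of $\mathrm{Spec}(C_0)$ quoted just before the theorem), placing us in the setting of Theorem~\ref{EH1}.

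Next, I would verify the two-sided matrix estimate $\theta_\beta^2\,I \leq R \leq \theta_\alpha^2\,I$ on $[0,T_s)$. Substituting $\rho_k = 1/|z_{ij}|^3\in[1/\beta^3,1/\alpha^3]$ into the symmetrized form of $-A$ and using the identity $\mathrm{tr}(-A) = m_{23}\rho_1+m_{13}\rho_2+m_{12}\rho_3$ together with a discriminant calculation, one shows that both eigenvalues of $-A$ lie in $[M/\beta^3,M/\alpha^3]=[\theta_\beta^2,\theta_\alpha^2]$.

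Then I would apply Theorem~\ref{EH1} twice, each time with the common initial data $B_1(0)=B_2(0)=C_0$, so that $U(0)=0\geq 0$ trivially. For the upper bound, take $R_1=R$ and $R_2=\theta_\beta^2 I$; since $R_2$ is scalar, the equation $\dot B_2+B_2^2+\theta_\beta^2 I=0$ preserves the spectral decomposition of $C_0$, and each eigenvalue $\lambda\in\mathrm{Spec}(C_0)$ evolves independently as $\lambda(t) = \theta_\beta\tan(\arctan(\lambda/\theta_\beta)-\theta_\beta t)$, blowing up at $\theta_\beta^{-1}\mathrm{arccot}(-\lambda/\theta_\beta)$. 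The first blow-up of $B_2$ is therefore driven by the smallest initial eigenvalue $\pi_s$, occurring at $t_2=\theta_\beta^{-1}\mathrm{arccot}(-\pi_s/\theta_\beta)$, and the EH conclusion $t_1\leq t_2$ is exactly the upper half of \eqref{main}. For the lower bound, reverse the roles: take $R_1=\theta_\alpha^2 I$ and $R_2=R$, so that $B_1$ now undergoes the scalar evolution with first blow-up at $\theta_\alpha^{-1}\mathrm{arccot}(-\pi_s/\theta_\alpha)$, and $t_1\leq t_2 = T_s$ gives the lower half.

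The main obstacle is the matrix inequality $\theta_\beta^2 I \leq -A \leq \theta_\alpha^2 I$, because $A$ as written in \eqref{EQ} is not symmetric in the naive basis; one must first reinterpret $-A$ as a self-adjoint operator with respect to the mass-weighted inner product (equivalently, pass to Jacobi coordinates) and then pin its two eigenvalues uniformly in the configuration under \eqref{bounds}. A secondary subtlety is the choice of equal initial data $B_1(0)=B_2(0)=C_0$ in Theorem~\ref{EH1}: it is this choice, rather than the a priori tempting $B_2(0)=\pi_s\,I$, that keeps $\pi_s$ the driver of the first blow-up on the comparison side and thereby produces the specific $\pi_s$ appearing symmetrically in both halves of \eqref{main}.
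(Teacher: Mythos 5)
Your proposal is correct and follows essentially the same route as the paper: symmetrize $B=\dot XX^{-1}$ and $R=-A$ (the paper does this with an explicit conjugating matrix $P$, valid precisely in the zero angular momentum case), apply Theorem~\ref{EH1} twice against the constant-coefficient equations with $R=\theta_\beta^2 I$ and $R=\theta_\alpha^2 I$ and common initial data $C_0$, and read off the blow-up times $\theta^{-1}\mathrm{arccot}(-\pi_s/\theta)$ from the smallest eigenvalue $\pi_s$. The only cosmetic difference is how the two-sided bound $\theta_\beta^2 I\le -A\le\theta_\alpha^2 I$ is verified: you propose a direct trace/discriminant computation, while the paper uses the linearity of $\mathcal{A}$ in the $\phi_i$ together with the negative semidefiniteness of $\mathcal{A}(\phi_1,\phi_2,\phi_3)$ for nonnegative arguments (Lemma~\ref{l1}), which amounts to the same estimate.
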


\begin{proof}
Let \( t \mapsto \phi_i(t) \), \( i = 1, 2, 3 \), be arbitrary positive smooth functions on \([0, +\infty)\). We consider a linear system of second order
\begin{equation} \label{phis}
\ddot{Z} = \mathcal{A} Z, \quad Z =
\begin{pmatrix}
Z_{11} & Z_{12} \\
Z_{21} & Z_{22}
\end{pmatrix}, \quad
\mathcal{A} (\phi_1,\phi_2,\phi_3) =  \begin{pmatrix}
-m_2 \phi_3 - m_{13} \phi_2 & m_1 \phi_{32} \\
m_2 \phi_{31} & -m_1 \phi_3 - m_{32} \phi_1
\end{pmatrix}\,,
\end{equation}
where \(\phi_{ij} = \phi_i - \phi_j\), $m_{ij}=m_i+m_j$.

If \((Z_{11}, Z_{12}) = (X_1, Y_1)\), \((Z_{21}, Z_{22}) = (X_2, Y_2)\), and \(\phi_i = \rho_i\) for \(i = 1, 2, 3\), then the equations \eqref{phis} and \eqref{EQ} coincide.

It is a straightforward computation to verify that for any solution \( Z \) of the system \eqref{phis}, the following identity holds:
\begin{equation}
\frac{1}{m_1} 
\begin{vmatrix}
Z_{11} & Z_{12} \\
\dot{Z}_{11} & \dot{Z}_{12}
\end{vmatrix}
+
\frac{1}{m_2}
\begin{vmatrix}
Z_{21} & Z_{22} \\
\dot{Z}_{21} & \dot{Z}_{22}
\end{vmatrix}
+
\frac{1}{m_3}
\begin{vmatrix}
Z_{11} + Z_{21} & Z_{12} + Z_{22} \\
\dot{Z}_{11} + \dot{Z}_{21} & \dot{Z}_{12} + \dot{Z}_{22}
\end{vmatrix}
= k, \quad k \in \mathbb{R}\,,
\end{equation}
which is an analog of the angular momentum conservation law in the three-body problem written in the form \eqref{EQ}.

The matrix \( B = \dot{Z} Z^{-1} \) is a solution of the following Riccati equation:
\begin{equation} \label{RIC}
\dot{B} + B^2 + R = 0, \quad R = -\mathcal{A}\,,
\end{equation}
an equation utilized in the study of the three-body problem in \cite{TS1}.

We introduce the matrices
\begin{equation}
I_2 = \left(
\begin{array}{rr}
1 & 0 \\
0 & 1
\end{array}
\right), \quad
\tilde{A}_1 = \left(
\begin{array}{rr}
\frac{m_{32}}{2} & 0 \\
-m_2 & -\frac{m_{32}}{2}
\end{array}
\right), \quad
\tilde{A}_2 = \left(
\begin{array}{rr}
-\frac{m_{13}}{2} & -m_1 \\
0 & \frac{m_{13}}{2}
\end{array}
\right)\,.
\end{equation}
Then, as shown in \cite[p. 6833]{TS2}, \( B \) can be presented, if \( k = 0 \), in the following form:
\begin{equation} \label{longer}
B = \frac{\dot{\delta}}{2\delta} I_2 + \frac{b}{m_2 \delta} \tilde{A}_1 - \frac{a}{m_1 \delta} \tilde{A}_2\,,
\end{equation}
where
\begin{equation} \label{alphab}
\beta = \frac{1}{2} \left( \frac{m_3}{m_1} + 1 \right), \quad \gamma = \frac{1}{2} \left( \frac{m_3}{m_2} + 1 \right) \,,
\end{equation}
and 
\begin{equation}
\delta =
\begin{vmatrix}
Z_{11} & Z_{12} \\
{Z}_{21} & {Z}_{22}
\end{vmatrix}, \quad 
a =
\begin{vmatrix}
Z_{11} & Z_{12} \\
\dot{Z}_{11} & \dot{Z}_{12}
\end{vmatrix}, \quad b =
\begin{vmatrix}
Z_{21} & Z_{22} \\
\dot{Z}_{21} & \dot{Z}_{22}
\end{vmatrix}\,.
\end{equation}

In order to apply Theorem \ref{EH1} to equation \eqref{RIC}, the solution \( B \) and the matrix \( R \)   should be symmetric. This can be achieved by the linear transformation \( \tilde{B} = P^{-1} B P \) with the invertible matrix \( P \) defined by:
\begin{equation}
P = \left( 
\begin{array}{cc}
-\frac{m_1}{m_{13}} & \frac{1}{m_{13}} \sqrt{\frac{m_1 m_3 M}{m_2}} \\
1 & 0 
\end{array}
\right), \quad M = m_1 + m_2 + m_3 \,.
\end{equation}

Indeed, it is easy to check that the matrices \( P^{-1} \tilde{A}_i P \), \( i=1,2 \), and \( \tilde{R} = P^{-1} R P \) are symmetric. Since \(\tilde{B}\) is a linear combination of \(I_2\) and \( P^{-1} \tilde{A}_i P \), it is also symmetric. Thus, equation \eqref{RIC} becomes:
\begin{equation}
\dot{\tilde{B}} + \tilde{B}^2 + \tilde{R} = 0\,,
\end{equation}
and Theorem \ref{EH1} can be applied.

\begin{lemma} \label{l1}
The matrix \( \tilde{\mathcal{A}}(\phi_1, \phi_2, \phi_3) = P^{-1} \mathcal{A}(\phi_1, \phi_2, \phi_3) P \) is negative semidefinite.
\end{lemma}

\begin{proof}
Since \( \tilde{\mathcal{A}} \) is symmetric, it is sufficient to show that the eigenvalues of \( \mathcal{A} \) are negative. We have:
\begin{equation}
\det(\mathcal{A}) = M (m_3 \phi_1 \phi_2 + m_2 \phi_1 \phi_3 + m_1 \phi_2 \phi_3) \geq 0, \quad 
\text{Tr}(\mathcal{A}) = - \left( m_{32} \phi_1 + m_{13} \phi_2 + m_{21} \phi_3 \right) \leq 0 \,,
\end{equation}
since \(\phi_i \geq 0\) for \(i = 1, 2, 3\).

Thus, \(\tilde{\mathcal{A}}\) is negative semidefinite.
\end{proof}

Let us consider two Riccati equations:
\begin{equation} \label{e1}
\dot{\tilde{B}}_1 + \tilde{B}_1^2 + \tilde{R}_1 = 0, \quad \tilde{R}_1 = -P^{-1} \mathcal{A}(\rho_1, \rho_2, \rho_3) P\,,
\end{equation}
and
\begin{equation} \label{e2}
\dot{\tilde{B}}_2 + \tilde{B}_2^2 + \tilde{R}_2 = 0, \quad \tilde{R}_2 = -P^{-1} \mathcal{A}\left( \frac{1}{\beta^3}, \frac{1}{\beta^3}, \frac{1}{\beta^3} \right) P \,
\end{equation}
subject to the same initial conditions
\begin{equation} \label{cond}
\tilde{B}_1(0) = \tilde{B}_2(0) = P^{-1}\dot{X}_0 X_0^{-1}P\,.
\end{equation}

By the linearity of \(\mathcal{A}\) as a function of \(\phi_i\) (\(i=1,2,3\)), we have
\[
\tilde{R}_2 - \tilde{R}_1 = P^{-1} \mathcal{A} \left( \rho_1 - \frac{1}{\beta^3}, \rho_2 - \frac{1}{\beta^3}, \rho_3 - \frac{1}{\beta^3} \right) P,
\]
and \(\rho_i - \frac{1}{\beta^3} \geq 0\) for \(i = 1, 2, 3\) by the definition of \(\beta\). 

According to Lemma \ref{l1}, it follows that \(\tilde{R}_1 \geq \tilde{R}_2\).

Equation \eqref{e2} can be easily solved by setting \(\tilde{R}_2 = \dot{Y} Y^{-1}\). This leads to the equivalent equation:
\begin{equation} \label{ee3}
\ddot{Y} = -\theta_{\beta}^2 Y, \quad \theta_{\beta} = \frac{\sqrt{M}}{\beta^{3/2}}\,.
\end{equation}
The solution to the Cauchy problem defined by \eqref{ee3} and the initial conditions $ Y(0)=Y_0=P^{-1} X_0 $, $\dot Y(0)=\dot{{Y}}_0=P^{-1} \dot{ X}_0  $  is:
\begin{equation} \label{eY}
Y(t) = \cos(\theta_{\beta} t)  Y_0 + \frac{1}{\theta_{\beta}} \sin(\theta_{\beta} t) \dot{{Y}}_0\,, \quad t \geq 0\,.
\end{equation}

Therefore, the corresponding maximal solution of \eqref{e2} is:
\begin{equation}
\tilde{B}_2(t) = \dot{Y}(t) Y(t)^{-1}, \quad t \in [0, t_2)\,,
\end{equation}
where \( t_2 \) is the first positive zero of \(t \mapsto \det(Y(t))\).

One has
\begin{equation}
\det(Y(t_2)) = 0 \, \Longleftrightarrow \, \det(\dot{X}_0 X_0^{-1} + \theta_{\beta} \cot(\theta_{\beta} t_2)I_2) = 0\,,
\end{equation}
since \( X_0 \) and  $P$  are  invertible matrices. As a consequence, we obtain:
\begin{equation}
- \theta_{\beta} \cot(\theta_{\beta} t_2) \in \text{Spec}(C_0), \quad C_0=\dot X_0 X_0^{-1}\,,
\end{equation}
and 
\begin{equation}
t_2 = \frac{1}{\theta_{\beta}} \mathrm{arccot}\left(- \frac{\pi_s}{\theta_{\beta}}\right)\,.
\end{equation}

Let \( T_s = t_1 > 0 \) be the first syzygy moment for the solution of the three-body problem \( t \mapsto z_i(t) \), \( i = 1, 2, 3 \). Then, using a similar argument, we show that the maximal solution of the Cauchy problem defined by \eqref{e1} and \eqref{cond} is defined in the interval \([0, t_1)\). Therefore, according to Theorem \ref{EH1}, \( T_s \leq t_2 \), and the upper bound in \eqref{main} is proven.

To prove the lower bound, consider the Riccati equations:
\begin{equation}
\dot{\tilde{B}}_1 + \tilde{B}_1^2 + \tilde{R}_1 = 0, \quad \tilde{R}_1 = -P^{-1} \mathcal{A}\left( \frac{1}{\alpha^3}, \frac{1}{\alpha^3}, \frac{1}{\alpha^3} \right) P \,,
\end{equation}
and
\begin{equation}
\dot{\tilde{B}}_2 + \tilde{B}_2^2 + \tilde{R}_2 = 0, \quad \tilde{R}_2 = -P^{-1} \mathcal{A}(\rho_1, \rho_2, \rho_3) P\,,
\end{equation}
where \(\tilde{R}_1 \geq \tilde{R}_2\). Applying the same arguments as before, the proof of Theorem \ref{TH} is complete.
\end{proof}

\begin{remark}
The uniform bounds \( |z_{ij}| \geq \alpha \), $\forall \, i<j$  have a very natural astrophysical interpretation: such motion corresponds to the collision-free movement of three rigid planets, each having the same radius \( R = \alpha / 2 \).
As seen from the proof of Theorem \ref{TH}, the uniform bounds \eqref{bounds} and the absence of collisions are required only for the period of time preceding the first syzygy.
\end{remark}

\section{Conclusion: Numerical Validation and Open Questions}

To numerically verify our findings, we will consider the figure-eight periodic solution with equal masses \(m_1 = m_2 = m_3=1\) and zero angular momentum, as described in \cite{A3}. The initial positions and velocities of the bodies are set as follows:
\begin{equation} \label{1}
z_1(0) = 1.08075 - i 0.0126893, \quad z_2(0) = -0.570154 + i 0.350807, \quad z_3(0) = -z_1(0) - z_2(0),
\end{equation}
and
\begin{equation} \label{2}
\dot{z}_1(0) = 0.0193421 + i 0.467219, \quad \dot{z}_2(0) = 1.0852 - i 0.174718, \quad \dot{z}_3(0) = -\dot{z}_1(0) - \dot{z}_2(0)\,.
\end{equation}

Numerically, we can find that the first syzygy occurs at \( T_s = 0.55431 \) (see Figure \ref{PIC}). The constants \(\alpha\) and \(\beta\) are estimated to be \(\alpha = 0.690526\) and \(\beta = 2\). Using formulas \eqref{ab}, we determine \(\theta_{\alpha} = 3.01849\) and \(\theta_{\beta} = 0.612372\).  

Using the initial conditions \eqref{1} and \eqref{2}, we compute:
\begin{equation}
X_0 = \begin{pmatrix}
0.734528 & 1.35841 \\
0.755791 & -0.470708
\end{pmatrix}, \quad \text{Spec}(X_0) = \{1.31082, -1.047\}, \quad \pi_s = -1.047,
\end{equation}
and the bounds \eqref{main} give us:
\begin{equation}
0.409781 \leq T_s \leq 0.864231,
\end{equation}
which are quite satisfactory.

We would like to highlight several open questions. It would be interesting to find bounds analogous to \eqref{main} for the velocity syzygy moments using similar ideas based on the comparison of matrix Riccati equations. It is important to note that the estimates provided by our Theorem \ref{TH} are valid only if the initial configuration at \(t = 0\) is not a syzygy. However, with some effort and by employing Theorem \ref{EH1}, one could establish bounds similar to \eqref{main} even in the case where the initial configuration is a syzygy.

Finally, generalising to the case of non-zero angular momentum is an intriguing and important challenge.

\vspace{0.5cm}

\noindent {\bf Akcnowledgments}\\

\noindent I would like to express my special gratitude  to  Richard Montgomery for   useful suggestions and valuable remarks and to Jean-Claude Sikorav for attracting my attention to work \cite{EH}.

\clearpage
\begin{figure}[tbp] 
\centering
\includegraphics[width=0.6\linewidth]{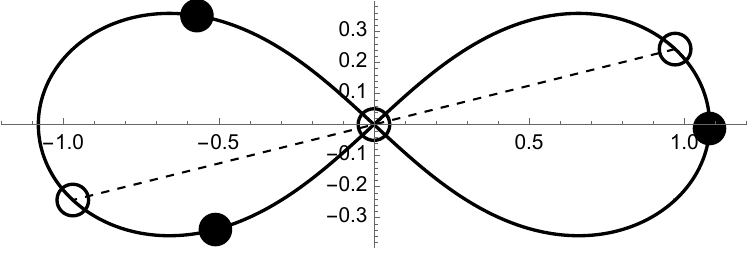}
\caption{The figure shows the trajectory of the figure-eight solution, as described in \cite{A3}. The initial positions of the bodies at \( t = 0 \) are represented by filled points. Additionally, the configuration of the first syzygy, which occurs at \( t = 0.55431 \), is indicated by unfilled points.}
\label{PIC}
\end{figure}

\end{document}